\theoremstyle{plain}
\newtheorem{theorem}{Theorem}[section]
\newtheorem{lemma}[theorem]{Lemma}
\theoremstyle{remark}
\newtheorem{remark}[theorem]{Remark}
\newtheorem{construction}[theorem]{Construction}
\newenvironment{construction2}[1]{\vspace{10pt}\begin{construction}[#1]}{\end{construction}\vspace{10pt}}
\newcommand{\PG}{\mathsf{PG}}
\newcommand{\GF}{\mathbb{GF}}
\newcommand{\I}{\mathrm{I}}
\newcommand{\Four}{\mathcal{F}_4}
\newcommand{\Five}{\mathcal{F}_5}
\newcommand{\Six}{\mathcal{F}_6}
\newcommand{\pairs}[2]{\big\langle\begin{smallmatrix}#1\\#2\end{smallmatrix}\big\rangle}
\renewcommand{\O}{\mathrm{O}}
\begin{document}

\title{A geometric construction of Mathon's perp-system from four lines of $\mathsf{PG}(5,3)$}

\author{John Bamberg}\email[John Bamberg]{bamberg@maths.uwa.edu.au}
\address{Department of Pure Mathematics and Computer Algebra\\
Ghent University\\
Krijgslaan 281 -- S22\\
B-9000 Ghent\\
Belgium.}
\curraddr{School of Mathematics and Statistics\\
The University of Western Australia\\
35 Stirling Highway\\
Crawley, W. A., 6009\\
Australia.}

\author{Frank De Clerck}\email[Frank De Clerck]{fdc@cage.ugent.be}

\address{Department of Pure Mathematics and Computer Algebra\\
Ghent University\\
Krijgslaan 281 -- S22\\
B-9000 Ghent\\
Belgium.}

\subjclass[2000]{Primary 05B25, 51E12, 51E14}


\thanks{ This work was supported by a Marie Curie Incoming International Fellowship within
  the 6th European Community Framework Programme, contract number: MIIF1-CT-2006-040360.}

\begin{abstract}
We give a new construction of the Mathon perp-system of the five dimensional projective
space over the field with three elements, starting from four lines.
\end{abstract}

\maketitle

%
%

\section{Introduction}

There are certain maximal arcs of $\PG(2,q^2)$, $q$ even, constructed by Denniston
\cite{Denniston69} for which there is a non-degenerate orthogonal polarity of $\PG(2,q^2)$
such that for every point of the maximal arc, its polar line is an exterior line of the
arc. Such maximal arcs are called \textit{self-polar} and they give rise to interesting
partial geometries with parameters $\mathsf{pg}(q^2-1,(q^2+1)(q-1),q-1)$ (see
\cite{DeClerckUpdate}).  If we choose a basis for $\GF(q^2)$ over $\GF(q)$, we can
identify the vectors of $\GF(q^2)^3$ with the vectors of $\GF(q)^6$. Thus a maximal arc of
$\PG(2,q^2)$ corresponds to an \textit{SPG-regulus of lines} of $\PG(5,q)$, however, the
resulting polarity is degenerate\footnote{Note: The case
  $(P,Q)=(\mathsf{Q}(2n,q^2),\mathsf{Q}(4n,q))$, $q$ even, actually cannot arise in \cite[Theorem
    4]{DeClerckUpdate}, which is shown explicitly in a paper of Nick Gill \cite[Corollary
    2.5]{Gill}.}.  The notion of a \textit{perp-system} was introduced by \cite{DDHM} as a
particular type of \textit{self-polar} SPG-regulus of a finite projective space, and
moreover, one that produces a partial geometry under linear representation.  A perp-system
$\mathcal{P}$ of a projective space $\PG(d,q)$, with respect to a polarity $\rho$, is a
maximal set of mutually disjoint $r$-subspaces of $\PG(d,q)$ such that every two not
necessarily distinct elements of $\mathcal{P}$ are opposite (with respect to $\rho$). So
in particular, every element of a perp-system is non-singular with respect to $\rho$.  By
\textit{maximal} we mean that the cardinality of $\mathcal{P}$ attains the theoretical
upper bound of
$$q^{(d-2r-1)/2}\left(q^{(d+1)/2}+1\right)/\left(q^{(d-2r-1)/2}+1\right)$$ (see
\cite[Theorem 2.1]{DDHM}).  In \cite{DDHM}, a particular perp-system $\mathcal{M}$
attributed to Mathon was given as a set of $21$ lines of $\mathsf{PG}(5,3)$ with respect
to a symplectic polarity. It turns out that $\mathcal{M}$ is also a perp-system with
respect to some elliptic and some hyperbolic polarity, and its full stabiliser in
$\mathsf{PGL}(6,3)$ is $S_5$; a maximal subgroup of $\mathsf{PGSp}(6,3)$.  Mathon's
perp-system is the only known perp-system not arising from a maximal arc of $\PG(2,q)$.

In trying to understand the only known perp-system not arising from a maximal arc, the
authors found that everything begins with finding as set $\Four$ of four curious lines  of the projective space $\PG(5,3)$. Every
line of $\PG(5,3)$ can be represented by a $2\times 6$ matrix of rank $2$, and so we can
write these four lines so that they look like a frame of a projective plane (i.e., the
blow-up of a frame of $\PG(2,9)$):

\begin{table}[H]
\begin{center}
\begin{tabular}{c|cccc}
$\ell_1$ & $(\I$&$\O$&$\O)$\\
$\ell_2$ & $(\O$&$\I$&$\O)$\\
$\ell_3$ & $(\O$&$\O$&$\I)$\\
$\ell_4$ & $(\I$&$\I$&$\I)$
\end{tabular}
\end{center}
\caption{The set $\Four$ of four lines of $\PG(5,3)$. The matrices $\I$ and $\O$ denote
  the $2$-dimensional identity and zero matrices respectively over $\GF(3)$.}
\end{table}

In this note we give a geometric construction of Mathon's perp-system, starting from the
set of lines $\Four$ above. Via this construction, we also show that Mathon's perp-system
is related to the generalised quadrangle $\mathsf{W}(2)$. We should also note that
Mathon's perp-system provides insight into the embedding of $A_5$ as a maximal subgroup of
$\mathsf{PSp}(6,3)$. As far as the authors are aware, the only \textit{raison d'\^etre}
for this embedding comes from the character table of $\mathsf{SL}(2,5)$.  The faithful
absolutely irreducible ordinary character of $\mathsf{SL}(2,5)$ of degree $6$ has zero
$3$-defect and is rational, hence it is absolutely irreducible over $\GF(3)$. The
Frobenius-Schur indicator of this character is $-1$, hence both the ordinary
representation and its representation over $\GF(3)$ admit a symplectic form, which we can
see from the Brauer character table below (in mod $3$):

\begin{table}[H]\footnotesize
\begin{tabular}{cccccccccc}
& ind &1a& 2a& 4a&  5a& 10a&  5b& 10b\\
\hline
$\varphi_1$  & + &   $1$& $1$ & $1$ &  $1$ &  $1$ &  $1$&   $1$\\
$\varphi_2$  & + & $3$ & $3$ &$-1$&   $A$&   $A$&  $A^*$&  $A^*$\\
$\varphi_3$  & + &   $3$&  $3$ &$-1$&  $A^*$&  $A^*$&   $A$&   $A$\\
$\varphi_4$  & +  &  $4$ & $4$  &0&  $-1$&  $-1$&  $-1$&  $-1$\\
$\varphi_5$  & - &   $2$& $-2$&  $0$&  $-A$ &  $A$& $-A^*$&  $A^*$\\
$\varphi_6$  & - &  $2$& $-2$ & $0$& $-A^*$ & $A^*$&  $-A$ &  $A$\\
$\varphi_7$  & - &  $6$& $-6$ &$0$ &   $1$&  $-1$&   $1$ & $-1$\\
\hline
\end{tabular}
\caption{The Brauer character table for $\mathsf{SL}(2,5)$ modulo $\GF(3)$ (see
  \cite{ModAtlas}).  Note: $A = -\omega-\omega^4$, where $\omega$ is a primitive fifth
  root of unity, and $A^*$ is the Galois conjugate of $A$.}
\end{table}

%
%

\section{A Construction of Mathon's Perp-System}

A \textit{partial geometry} $\mathsf{pg}(s,t,\alpha)$ is a partial linear space such that
every line contains $s+1$ points, every point is incident with $t+1$ lines and for a point
$P$ and line $\ell$ which are not incident, there are $\alpha$ points on $\ell$ collinear
with $P$. We have already spoken fleetingly of partial geometries in the introduction
where we mentioned that maximal arcs and perp-systems give rise to partial geometries. In
turn, a partial geometry gives rise to a strongly regular graph
$$\mathfrak{srg}((s+1)(st+\alpha)/\alpha,s(t+1),s-1+t(\alpha-1),\alpha(t+1)),$$ and up to
duality, we only know of about eight constructions of partial geometries (see
\cite[4.1]{DeClerckUpdate}).  For $\alpha=2$, the only known partial geometries are
sporadic: the Van Lint-Schrijver, Haemers and Mathon partial geometries, the latter
arising from Mathon's perp-system $\mathcal{M}$.

From \cite{DDHM}, it is known that $\mathcal{M}$ consists of $21$ lines of
$\mathsf{PG}(5,3)$ forming a partial line-spread. The complement of $\mathcal{M}$ contains
precisely $21$ solids mutually intersecting in lines, and there are exactly three solids
through any point in this complement.  They showed by an exhaustive computer search, that
up to projectivity, there was only one such configuration of solids. This was enough for
Cristina Tonesi \cite{TonesiThesis} to ascertain that if a partial geometry with
$\alpha=2$ is a linear representation of a perp-system, then it is equivalent to the
partial geometry arising from the Mathon perp-system.  We will now develop a construction
of Mathon's perp-system from the four lines $\Four$. Let $X$ be the following
skew-symmetric matrix over $\GF(3)$:
$$\begin{pmatrix}
0&1\\
2&0
\end{pmatrix}.$$

\begin{lemma}
Let $\mathsf{W}(M)$ be a symplectic space $\mathsf{W}(5,3)$ defined by a Gram matrix
$M$. Then $\Four$ is a partial perp-system with respect to $\mathsf{W}(M)$ if and only if
$M$ is of the form
$$\begin{pmatrix}
\pm X&A&B\\
-A^T&\pm X&C\\
-B^T&-C^T&\pm X
\end{pmatrix}$$
for invertible matrices $A,B,C$ satisfying:
\begin{enumerate}
\item[(i)] the sum of the blocks of $M$ is nonzero;
\item[(ii)] $A+B-X$, $B+C-X$ and $C+A-X$ are invertible.
\end{enumerate}
\end{lemma}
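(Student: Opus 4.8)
The plan is to translate the two defining properties of a partial perp-system into a short, finite list of $2\times 2$ determinant conditions, and then read off the shape of $M$. Writing $R_1,\dots,R_4$ for the $2\times 6$ matrices of $\Four$, the polarity $\mathsf{W}(M)$ makes $\ell_i=\langle R_i\rangle$ non-singular exactly when $R_iMR_i^{T}$ is invertible, and makes $\ell_i,\ell_j$ opposite exactly when $R_iMR_j^{T}$ is invertible: a nonzero vector $xR_i$ of $\ell_i$ lies in $\ell_j^{\perp}$ precisely when $R_jMR_i^{T}x^{T}=0$, and $R_jMR_i^{T}=-(R_iMR_j^{T})^{T}$ has the same rank. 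Since a partial perp-system only demands that each of the finitely many pairs $\{\ell_i,\ell_j\}$ be opposite, together with the case $i=j$ (non-singularity), the whole problem collapses to the invertibility of the ten $2\times 2$ matrices $R_iMR_j^{T}$ with $1\le i\le j\le 4$.

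First I would record these ten matrices in terms of the nine $2\times 2$ blocks $M_{ab}$ of $M$. The three matrices $R_iMR_i^{T}$ $(i=1,2,3)$ are the diagonal blocks $M_{11},M_{22},M_{33}$; the three matrices $R_iMR_j^{T}$ $(1\le i<j\le 3)$ are the upper blocks $M_{12},M_{13},M_{23}$; and the four matrices involving $\ell_4$ are block sums, namely $R_1MR_4^{T}=M_{11}+M_{12}+M_{13}$ together with the analogous block-row sums through $\ell_2$ and $\ell_3$, and $R_4MR_4^{T}=\sum_{a,b}M_{ab}$. Next I would impose that $M$ is skew-symmetric (a symplectic Gram matrix). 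This forces $M_{ba}=-M_{ab}^{T}$, so the six off-diagonal blocks are governed by $A:=M_{12}$, $B:=M_{13}$, $C:=M_{23}$, and it forces each diagonal block to be alternating. Over $\GF(3)$ the only invertible alternating $2\times 2$ matrices are $\pm X$, so the three diagonal invertibility conditions say exactly that $M_{11},M_{22},M_{33}\in\{X,-X\}$; this yields the displayed shape of $M$, while the three upper invertibility conditions become precisely ``$A,B,C$ are invertible''.

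It remains to convert the four conditions involving $\ell_4$. The condition on $R_4MR_4^{T}=\sum_{a,b}M_{ab}$ is exactly (i): this sum is again alternating, being a sum of three matrices $\pm X$ together with the three differences $A-A^{T}$, $B-B^{T}$, $C-C^{T}$, and an alternating $2\times 2$ matrix is invertible if and only if it is nonzero. For the three remaining block-row sums I would exploit two elementary facts: that $\det(-N^{T})=\det N$ for a $2\times 2$ matrix $N$, so that invertibility is preserved under $N\mapsto -N^{T}$, and that $X^{T}=-X$. After fixing the diagonal signs appropriately, applying $N\mapsto-N^{T}$ is designed to turn each block-row sum into an untransposed expression in $A,B,C$ and a single copy of $X$, to be matched against the three expressions $A+B-X$, $B+C-X$, $C+A-X$ of (ii).

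The hard part will be this very last step, and it is worth isolating where the difficulty sits. The block-rows through $\ell_1$ and $\ell_3$ are ``pure'': their off-diagonal entries lie entirely in the upper triangle (the blocks $A,B$) in the first case and entirely in the lower triangle (the blocks $-B^{T},-C^{T}$) in the third, so a single application of $N\mapsto -N^{T}$ symmetrises them at once and delivers two of the three conditions of (ii). The block-row through $\ell_2$, however, is \emph{mixed}: it contains the upper block $C$ and the lower block $-A^{T}$, so it does not clean up under a single transpose, and reconciling it with the remaining expression of (ii) is the genuinely delicate point. I would attack it using the $2\times 2$ adjugate identity $\mathrm{adj}(N)=XN^{T}X^{-1}$ (equivalently $N^{T}=\mathrm{tr}(N)\,I-X^{-1}NX$) to trade the lone transpose for an $X$-conjugate, keeping careful track of the three independent signs $\pm X$ and of $X^{-1}=X^{T}=-X$. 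I expect all the real work of the proof to concentrate in this sign-and-transpose bookkeeping for the $\ell_2$-row; once the correct dictionary between the $\ell_4$-conditions and (i)--(ii) is in hand, both implications of the lemma follow by reading that dictionary in the two directions.
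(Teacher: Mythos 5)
Your reduction is exactly the paper's: translate ``partial perp-system'' into the invertibility of the ten matrices $R_iMR_j^{T}$ with $1\le i\le j\le 4$, note that the diagonal ones are alternating (hence invertible iff equal to $\pm X$, resp.\ nonzero in the case of the sum of all blocks), and read the off-diagonal conditions off the block structure. Up to and including condition (i), your dictionary coincides with the paper's own three-sentence argument. The difficulty is precisely the step you defer to the end, and it is worse than ``delicate'': the $\ell_2$-versus-$\ell_4$ condition is the invertibility of $R_2MR_4^{T}=-A^{T}+M_{22}+C$, and no transpose or adjugate bookkeeping will identify this with the invertibility of $C+A-X$, because the two determinants genuinely differ. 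For instance, with $A=\left(\begin{smallmatrix}1&1\\0&1\end{smallmatrix}\right)$, $C=\left(\begin{smallmatrix}1&0\\0&1\end{smallmatrix}\right)$ and $M_{22}=-X$ one gets $-A^{T}-X+C=\left(\begin{smallmatrix}0&2\\0&0\end{smallmatrix}\right)$, which is singular, while $C+A-X=\left(\begin{smallmatrix}2&0\\1&2\end{smallmatrix}\right)$ is invertible; taking $B=C$ equal to the identity and all diagonal blocks $-X$ then completes this to a Gram matrix satisfying every condition of the lemma for which $\ell_2$ and $\ell_4$ are nevertheless not opposite. A milder instance of the same problem is the unconstrained $\pm$ on the diagonal: if $M_{11}=+X$, the $\ell_1$-row forces $A+B+X$ invertible, which is not equivalent to $A+B-X$ invertible.

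So the dictionary you intend to establish in the final step does not exist. What your computation actually proves is that, beyond the invertibility of $A,B,C$ and condition (i), the correct conditions are the invertibility of $A+B+M_{11}$, of $C-A^{T}+M_{22}$, and of $B+C+M_{33}$ (the last after a single application of $N\mapsto -N^{T}$, using $X^{T}=-X$). In other words, you have uncovered an imprecision in the statement itself: the paper's proof disposes of condition (ii) in one unjustified sentence, and it is the middle expression $C+A-X$, together with the free signs on the diagonal, that does not survive the careful version of that sentence. As a proof of the lemma exactly as printed your proposal therefore cannot be completed; it becomes a complete and correct proof once (ii) is replaced by the three conditions just listed.
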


\begin{proof}
That $\ell_1$, $\ell_2$ and $\ell_3$ are self-opposite implies that the three diagonal
blocks of $M$ are each equal to $\pm X$. Now consider $\ell_4$. Note that $\left( \I\,
\I\, \I\right)M\left( \I\, \I\, \I\right)^T$ is the sum of all the blocks of $M$, and we
require this to be nonzero. So we know that $M$ is of the form
$$\left(\begin{smallmatrix}
\pm X&A&B\\
-A^T&\pm X&C\\
-B^T&-C^T&\pm X
\end{smallmatrix}\right)$$
for some matrices $A,B,C$. For $\ell_1$, $\ell_2$ and $\ell_3$ to be opposite to one
another, we need that $A$, $B$, $C$ are each invertible. Condition (ii) follows from
requiring that $\ell_1$, $\ell_2$ and $\ell_3$ are opposite to $\ell_4$.
\end{proof}

%
%

\subsection{From Four to Five}

Let $\mathsf{W}(M)$ be the symplectic space defined by the Gram matrix:
$$\begin{pmatrix}
\O&X&X\\
X&\O&X\\
X&X&\O
\end{pmatrix}$$
and let $\perp$ denote the corresponding polarity.  If $\mathcal{K}$ is a set of subspaces
of a projective space, we will use the notation $\pairs{\mathcal{K}}{2}$ to denote the
spans of all distinct pairs of elements of $\mathcal{K}$.

\begin{lemma}\label{lemma:four}
Let $\mathcal{L}$ be the set of lines of $\mathsf{W}(M)$ which are disjoint from each
element of $\pairs{\Four}{2}$, and which for each $\ell\in\Four$ meets $\ell^\perp$ in a
point.  The set $\mathcal{L}$ has $24$ lines, and they can each be represented by a matrix
of the form
$$\begin{pmatrix}I&R&S\end{pmatrix}$$ where $R$, $S$, $R-I$ and $S-I$ are invertible, and
  $R+S$, $R+I$, $S+I$ have rank $1$.  Moreover, the element-wise stabiliser $H$ of $\Four$
  in $\mathsf{GL}(6,3)$ is the set of all diagonal matrices of the form
$$
\begin{pmatrix}
E&O&O\\
O&E&O\\
O&O&E
\end{pmatrix}
$$ where $E\in\mathsf{GL}(2,3)$. The quotient by the scalars yields a group isomorphic to
$S_4$, and this group acts transitively (under the induced projective action), and hence
regularly, on the set of $24$ elements of $\mathcal{L}$.
\end{lemma}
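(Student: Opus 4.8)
The plan is to handle the four assertions in turn: the shape of the element-wise stabiliser $H$, the identification of $H$ modulo scalars with $S_4$, the parametrisation and count of $\mathcal{L}$, and the regularity of the action.

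First I would determine $H$ by a direct block computation. Writing a candidate $g\in\mathsf{GL}(6,3)$ in $2\times2$ blocks $(g_{ij})_{1\le i,j\le3}$ and regarding each $\ell_i$ as the row space of its defining $2\times6$ matrix, the requirement that $g$ fix $\ell_1,\ell_2,\ell_3$ as subspaces forces every off-diagonal block to vanish and each diagonal block to be invertible, so $g=\mathrm{diag}(E_1,E_2,E_3)$; imposing that $g$ also fix $\ell_4$ then forces $E_1=E_2=E_3=:E$, giving $H\cong\mathsf{GL}(2,3)$. Since the scalars of $\mathsf{GL}(6,3)$ are precisely the $\mathrm{diag}(\lambda I,\lambda I,\lambda I)$, the image of $H$ in the projective group is $\mathsf{GL}(2,3)$ modulo its scalars, namely $\mathsf{PGL}(2,3)$, which is isomorphic to $S_4$ through its faithful action on the four points of $\PG(1,3)$. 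This part is routine.

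Next I would pin down the matrix form and check that $H$ stabilises $\mathcal{L}$. Disjointness from the element $\langle\ell_2,\ell_3\rangle$ of $\pairs{\Four}{2}$ forces any $m\in\mathcal{L}$ to project isomorphically onto the first coordinate pair, so $m$ has a unique representative $(I\,R\,S)$. Writing a point of $m$ as $a(I\,R\,S)$ and working block-wise, I would translate the five remaining disjointness conditions into the invertibility of $R$, $S$, $R-I$, $S-I$, $R-S$, and I would compute each $\ell_i^{\perp}$ directly from $M$ (the perp of a coordinate line is read off from the columns of $M$) in order to turn the four conditions ``$m$ meets $\ell_i^{\perp}$ in a point'' into the rank-one conditions on $R+S$, $R+I$, $S+I$ and $R+S+I$. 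A short eigenvalue argument---the invertibility of $R$ and $R-I$ together with $R+I$ of rank one forces both eigenvalues of $R$ to equal $-1$---then shows $R=-I+N_R$ and $S=-I+N_S$ with $N_R,N_S$ nonzero rank-one nilpotents, and it shows the $\ell_4$-condition to be redundant, since $N_R+N_S$ has trace zero and so its second eigenvalue is determined by the first. Crucially, the identity $EXE^{T}=\det(E)\,X$ for $2\times2$ matrices gives $gMg^{T}=\det(E)\,M$ for $g=\mathrm{diag}(E,E,E)$, so $H$ preserves $\perp$; hence $H$ fixes the six solids and the four perps and therefore stabilises $\mathcal{L}$, acting by simultaneous conjugation $(R,S)\mapsto(E^{-1}RE,E^{-1}SE)$.

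The count $|\mathcal{L}|=24$ is the \emph{crux}. I would identify the eight nonzero rank-one nilpotents with the eight nonzero isotropic vectors of the ternary form $Q(a,b,c)=a^{2}+bc$, a nondegenerate conic of $\PG(2,3)$; under this identification the one coupling condition ``$R+S$ has rank one'', which reads $\det(N_R+N_S)=-1$, becomes $B(u_R,u_S)=-1$ for the associated symmetric bilinear form $B$. A direct check shows that each of the eight isotropic vectors has exactly three isotropic partners with $B=-1$, giving $8\cdot3=24$ ordered pairs and hence $24$ lines; the leftover requirements that $R-S$ be invertible and $R+S+I$ have rank one fall out automatically from $\det(N_R-N_S)=-\det(N_R+N_S)$ and $\det(I+T)=1+\det(T)$, where $T=N_R+N_S$. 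I expect this enumeration, together with the care needed to compute the four perps and to confirm that ``line of $\mathsf{W}(M)$'' imposes no isotropy equation (which is borne out by the count landing on $24$), to be the main obstacle. Finally, for regularity I would show the $S_4$-action is free: if $\mathrm{diag}(E,E,E)$ fixes $(I\,R\,S)$ then $E$ centralises both $N_R$ and $N_S$, which have distinct invariant lines because $R-S$ is invertible; but the common centraliser of two rank-one nilpotents with distinct invariant lines is just the scalars, so $g$ is scalar. A free action of a group of order $24$ on a $24$-element set has all orbits of size $24$, hence is transitive with trivial stabilisers, i.e.\ regular.
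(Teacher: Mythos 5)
Your proposal is correct, and its overall skeleton (normalise a line of $\mathcal{L}$ to $(I\,R\,S)$, translate the six disjointness conditions and the four perp conditions into invertibility and rank-one conditions on $R$ and $S$, then count and act by simultaneous conjugation under $H$) matches the paper's. Where you genuinely diverge is in the two computational pinch points. For the count, the paper simply lists the eight admissible $2\times2$ matrices and checks by inspection that each has exactly three admissible partners; you instead prove via an eigenvalue argument that the admissible matrices are exactly $-I+N$ with $N$ a nonzero rank-one nilpotent, identify these with the eight isotropic vectors of the conic $a^2+bc=0$ in $\PG(2,3)$, and read off the ``three partners'' count from the polar form, which also gives you for free that the conditions ``$R-S$ invertible'' and ``$R+S+I$ of rank one'' are redundant (the paper never addresses why its $24$ pairs satisfy the latter). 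For regularity, the paper argues transitivity from the fact that the eight matrices form a conjugacy class of $\mathsf{GL}(2,3)$ --- which, as stated, only gives transitivity on the first coordinate of the pair $(R,S)$ --- whereas you prove the action is free via the common centraliser of two nilpotents with distinct invariant lines and conclude regularity from $|S_4|=|\mathcal{L}|=24$. Your route is more conceptual and, on the transitivity point, arguably tighter than the paper's.

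The one place you should firm up is the total isotropy of the $24$ candidate lines. The definition of $\mathcal{L}$ requires its members to be lines of $\mathsf{W}(M)$, i.e.\ to satisfy $(I\,R\,S)M(I\,R\,S)^T=0$, and your parenthetical ``which is borne out by the count landing on $24$'' is circular: the $24$ comes from the non-isotropy conditions, and isotropy could a priori cut it down. The paper defers this to ``one can check''; you can do better with the tools you already have: since $\mathrm{diag}(E,E,E)$ satisfies $gMg^T=\det(E)M$ and your freeness argument shows $H$ acts regularly on the $24$ candidates, isotropy of a single representative (e.g.\ $R=\left(\begin{smallmatrix}2&0\\2&2\end{smallmatrix}\right)$, $S=\left(\begin{smallmatrix}2&2\\0&2\end{smallmatrix}\right)$) propagates to all $24$, reducing the verification to one matrix product.
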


\begin{proof}
Consider an element $\ell$ of $\mathcal{L}$ and represent it as a $2\times 6$ matrix over
$\GF(3)$, split into three $2\times 2$ blocks:
$$\ell: \left(\begin{smallmatrix}
Q&R&S
\end{smallmatrix}\right)$$
Since $\ell$ is disjoint from the solid represented by $\left(\begin{smallmatrix}
  \I&\O&\O\\ \O&\I&\O\end{smallmatrix}\right)$, it follows that $S$ is
  invertible. Similarly, $Q$ and $R$ are invertible. Since $\ell$ is disjoint from the
  solid represented by $\left(\begin{smallmatrix}
    \I&\O&\O\\ \I&\I&\I\end{smallmatrix}\right)$, it follows that $S-R$ has rank $2$.
    Similarly, $R-Q$ and $Q-S$ are invertible.  Now $\ell_1^\perp$ has matrix
    $\left(\begin{smallmatrix} \I&\O&\O\\ \O&\I&-\I
\end{smallmatrix}\right)$
and so $R+S$ has rank $1$.  Similarly, the matrices $Q+R$ and $S+Q$ have rank $1$. Now we
look at $\ell_4^\perp$, which has matrix $\left(\begin{smallmatrix} \I&\O&-\I\\ \O&\I&-\I
\end{smallmatrix}\right),$
and we see that $Q+R+S$ has rank $1$.  Now $\ell$ must be totally isotropic, which is
equivalent to
$$RXQ+SXQ+QXR+SXR+QXS+RXS=0.$$ Without loss of generality, we may suppose that $Q$ is the
identity matrix.  Hence
\begin{enumerate}
\item[(i)] $R-I$ and $S-I$ are invertible;
\item[(ii)] $R+S$, $R+I$, $S+I$, and $R+S+I$ have rank $1$;
\item[(iii)] $RS+SX+XR+SXR+XS+RXS=0$.
\end{enumerate}
There are eight matrices $Y$ of $\mathsf{GL}(2,3)$ such that $Y-I$ is invertible and $Y+I$
has rank $1$, and they are
$$
\left(\begin{smallmatrix}0&1\\ 2&1\end{smallmatrix}\right),
\left(\begin{smallmatrix}0&2\\ 1&1\end{smallmatrix}\right),
\left(\begin{smallmatrix}1&1\\ 2&0\end{smallmatrix}\right),
\left(\begin{smallmatrix}1&2\\ 1&0\end{smallmatrix}\right),
\left(\begin{smallmatrix}2&0\\ 1&2\end{smallmatrix}\right),
\left(\begin{smallmatrix}2&0\\ 2&2\end{smallmatrix}\right),
\left(\begin{smallmatrix}2&1\\ 0&2\end{smallmatrix}\right),
\left(\begin{smallmatrix}2&2\\ 0&2\end{smallmatrix}\right).
$$ For each matrix $Y$ above, there are exactly three of the eight matrices which when
  added to $Y$ produce a matrix of rank $1$.  So we have $24$ pairs $(R,S)$ which satisfy
  the first two conditions above.  Moreover, one can check that the third condition is
  satisfied.
  
It is not difficult to see that the eight matrices underlying the set $\mathcal{L}$ in
fact form a conjugacy class of $\mathsf{GL}(2,3)$. Now suppose $E\in\mathsf{GL}(2,3)$ and
consider an element $(\I\, R\, S)$ of $\mathcal{L}$.  The image of $(\I\, R\, S)$ under
right multiplication by the element of $H$ arising from $E$ is $(E, RE,SE)$. Now observe
that if we were to normalise this $2\times 6$ matrix by elementary matrices so that we
generate the same line, we would need to apply the elementary matrices to the left of this
matrix. So $(E, RE,SE)$ represents the same line as $(\I, E^{-1}RE, E^{-1}SE)$. Therefore
$\mathcal{L}$ is $H$-invariant (under the induced projective action).  Clearly this action
is also transitive as the eight matrices underlying the set $\mathcal{L}$ form a conjugacy
class of $\mathsf{GL}(2,3)$.
\end{proof}

\begin{construction}
Let $\Five$ be the set $\Four$ augmented by one of the lines $(\I\, R\, S)$ of
$\mathcal{L}$.
\end{construction}

\begin{lemma}\label{lemma:five}
The setwise stabiliser of $\Five$ in $\mathsf{PGL}(6,3)$ is isomorphic to $S_5$, and this
group stabilises the symplectic forms defined by the Gram matrices
$$\begin{pmatrix}
\O&A&B\\
-A^T&\O&C\\
-B^T&-C^T&\O
\end{pmatrix}$$
where $A+B+C$ and $AR^T+BS^T+RCS^T$ are both symmetric matrices.
\end{lemma}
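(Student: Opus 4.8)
The statement has two independent parts: that the setwise stabiliser $G$ of $\Five$ in $\mathsf{PGL}(6,3)$ is isomorphic to $S_5$, and that $G$ preserves the displayed family of Gram matrices. The plan is to prove the first by analysing the faithful permutation action of $G$ on the five lines, and to prove the second by identifying that family with the space of alternating forms for which every line of $\Five$ is totally isotropic. For the group, I would consider the homomorphism $\phi\colon G\to\operatorname{Sym}(\Five)\cong S_5$ recording how $G$ permutes the five lines. Any element of $\ker\phi$ fixes each of $\ell_1,\ell_2,\ell_3,\ell_4$, hence by Lemma~\ref{lemma:four} it lies in the image of $H\cong S_4$; since it also fixes $(\I\,R\,S)\in\mathcal{L}$ and $H$ acts regularly on $\mathcal{L}$, it must be trivial. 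Thus $\phi$ is injective and $|G|\le 120$, so the whole difficulty is to show that $\phi$ is onto.

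For surjectivity, I would use that $5$ is prime: a transitive subgroup of $S_5$ containing a transposition is all of $S_5$, or equivalently (as $A_5$ is maximal in $S_5$) it suffices to place a transitive, indeed an $A_5$, subgroup inside $\phi(G)$ together with one odd permutation. A transposition should be easy to realise by a suitable involution of $\mathsf{PGL}(6,3)$ interchanging two of the five lines and fixing the others. The real obstacle is transitivity: the evident projective symmetries of the frame $\Four$ permute $\ell_1,\dots,\ell_4$, but among them sits the regularly-acting $H\cong S_4$ of Lemma~\ref{lemma:four}, so they in general move $(\I\,R\,S)$ to other members of $\mathcal{L}$ and hence off $\Five$. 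What must be produced is an element of $G$ that genuinely blends the fifth line with the frame, i.e.\ whose image is not contained in the point-stabiliser of $(\I\,R\,S)$; I expect this to come from the copy of $A_5\cong\mathsf{SL}(2,5)/\{\pm I\}$ afforded by the symplectic degree-$6$ representation highlighted in the introduction. Constructing this transitive action explicitly — for instance an order-$5$ element of $\mathsf{PSp}(6,3)$ cycling the five lines — is the crux of the argument.

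For the forms, I would first show by direct computation that the displayed family is exactly the space $U$ of alternating forms $N$ for which every line of $\Five$ is totally isotropic, using that a line with $2\times6$ matrix $\Lambda$ is totally isotropic precisely when $\Lambda N\Lambda^{T}=\O$. Imposing this for $\ell_1,\ell_2,\ell_3$ annihilates the three diagonal $2\times2$ blocks of $N$, producing the displayed shape with off-diagonal blocks $A,B,C$; imposing it for $\ell_4=(\I\,\I\,\I)$ says that the sum of all blocks, namely $(A+B+C)-(A+B+C)^{T}$, vanishes, i.e.\ $A+B+C$ is symmetric; and imposing it for $(\I\,R\,S)$ gives that $AR^{T}+BS^{T}+RCS^{T}$ equals its own transpose, i.e.\ is symmetric. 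Finally I would observe that $U$ is $G$-invariant, which is the preservation claim: if $g$ represents an element of $G$ and $N\in U$, then for each line $\ell$ of $\Five$ with matrix $\Lambda$ we have $\Lambda(gNg^{T})\Lambda^{T}=(\Lambda g)N(\Lambda g)^{T}=\O$, because $g$ carries $\ell$ to another line of $\Five$, which is totally isotropic for $N$; hence $gNg^{T}\in U$. The symplectic forms in the statement are simply the non-degenerate members of $U$ (for example the original $M$), and the invariance of $U$ gives the asserted stabilisation.
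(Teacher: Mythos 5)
Your treatment of the second half (the invariant forms) is sound and in fact slightly more complete than the paper's: you derive the block shape from $\ell_1,\ell_2,\ell_3$, the symmetry of $A+B+C$ from $\ell_4$, the symmetry of $AR^{T}+BS^{T}+RCS^{T}$ from $(\I\,R\,S)$, and you add the (correct, and worth stating) observation that the solution space $U$ is $G$-invariant because $G$ permutes $\Five$. Your injectivity argument for $\phi\colon G\to\operatorname{Sym}(\Five)$ also matches the paper: the kernel sits in the projective image of $H\cong S_4$, which acts regularly on $\mathcal{L}$, so fixing $(\I\,R\,S)$ forces triviality, whence $|G|\le 120$.

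The genuine gap is surjectivity of $\phi$, which you explicitly defer (``the crux of the argument'') without carrying out. Nothing proved so far rules out $G$ being a proper subgroup of $S_5$ --- for instance the point-stabiliser of $(\I\,R\,S)$ together with nothing mixing the fifth line into the frame --- and your two auxiliary claims are both unsubstantiated: the transposition is not obviously ``easy'' (note that the stabiliser of $(\I\,R\,S)$ inside the projective image of $H$ is trivial by regularity, so any such involution must permute the blocks nontrivially and be checked to preserve $\Five$), and the order-$5$ element is exactly the hard existence statement. The paper closes this gap by brute force: it writes down two explicit matrices $C,D\in\mathsf{GL}(6,3)$, verifies the presentation relations $C^2=1$, $D^8=1$, $[C,D^4]=1$, $(CD)^5=1$, $[C,D]^3=1$ to identify $\langle C,D\rangle\cong\mathsf{SL}(2,5){:}2$ (of order $240$, hence of order $120$ modulo scalars), and checks directly that this group stabilises $\Five$; combined with the upper bound $|G|\le 120$ this pins down $G\cong S_5$. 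Your proposed route (a transitive subgroup of $S_5$ containing a transposition is all of $S_5$) is viable in principle, but until you actually exhibit the $5$-cycle --- or some concrete generating set --- in $\mathsf{PGL}(6,3)$ and verify it preserves $\Five$, the lemma is not proved.
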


\begin{proof}
It is not difficult to show that the five lines determine the shape of the matrix above.
That is, any skew-symmetric matrix $M$ for which $\ell M\ell^T=0$ for all of our five
lines $\ell$, must be of the form stated.  For example, that $(\I\,\I\,\I)$ is totally
isotropic with respect to the symplectic form defined by a skew-symmetric matrix of the
form above, implies that $A+B+C$ is symmetric.

So now we just show that the stabiliser $G$ of $\Five$ is isomorphic to $S_5$.  Recall
from Lemma \ref{lemma:four} that the element-wise stabiliser $H$ of $\Four$ in
$\mathsf{GL}(6,3)$ is the set of all matrices of the form
$$
\begin{pmatrix}
E&O&O\\
O&E&O\\
O&O&E
\end{pmatrix}
$$ where $E\in\mathsf{GL}(2,3)$, and the quotient by the scalars yields a group isomorphic
to $S_4$ acting regularly on $\mathcal{L}$. So the action of $G$ on $\Five$ is faithful
and hence we know that $G$ is embedded in $S_5$. By the transitivity of $H$ on
$\mathcal{L}$, we may chose our favourite representative for $(I\,R\,S)$:
$$R:=\begin{pmatrix}2&0\\ 2&2\end{pmatrix},\quad 
S=\begin{pmatrix}2&2\\ 0&2\end{pmatrix}.$$

Now let
$$C=\begin{pmatrix}
O&O&C_0\\ 
O&C_0&O\\ 
C_0&O&O\\ 
\end{pmatrix},\quad
D=\begin{pmatrix}
1&1&0&0&0&0\\
1&0&0&0&0&0\\
0&0&0&0&2&2\\
0&0&0&0&0&1\\
0&1&0&1&0&1\\
1&0&1&0&1&0
\end{pmatrix}
$$ where $C_0=\left(\begin{smallmatrix}2&1\\ 0&1\end{smallmatrix}\right)$.  Then it can be
  easily checked that $C$ and $D$ satisfy the relations
$$C^2=1, \quad D^8=1,\quad [C,D^4]=1,\quad (CD)^5=1,\quad [C,D]^3=1$$ and hence $\langle
  C,D\rangle \cong \mathsf{SL}(2,5):2$ (see \cite{Atlas}). Moreover, $\langle C,D\rangle$
  stabilises $\Five$, and since this group must be maximal within the stabiliser of a
  symplectic form, $G$ is the projective image of $\langle C,D\rangle$.
\end{proof}

%
%

\subsection{From Five to Six}

Most of us are familiar with the isomorphism of $S_5$ with $\mathsf{PGL}(2,5)$ which gives
rise to two 5-transitive actions of $S_5$: on five and six points respectively.  To obtain
the action on six points directly from the action on five points, one simply notices that
$S_5$ has six subgroups of order $5$ which are permuted naturally under the conjugation
action of $S_5$; thus producing the multiply transitive action on $6$ elements. Likewise,
the stabiliser $S_5$ of $\Five$ determines a special set of six lines from
$\mathcal{L}$. The total number of lines of $\mathsf{PG}(6,3)$ is 11011, which you will
notice is congruent to $1$ modulo $5$.  Hence a $5$-cycle has a unique fixed line. The six
lines $\Six$ we want are precisely those lines of $\mathsf{PG}(5,3)$ which are fixed by
some $5$-cycle in $S_5$.


\begin{construction}
Let $\Six$ be those lines of $\mathsf{PG}(5,3)$ which are fixed by some $5$-cycle in
$S_5$.
\end{construction}

\begin{lemma}\label{lemma:six}
The lines $\Six$ form a partial perp-system of $\mathsf{W}(M)$ where $M$ is a matrix of
the form
$$\begin{pmatrix}
\O&A&B\\
-A^T&\O&C\\
-B^T&-C^T&\O
\end{pmatrix}$$
where $A+B+C$ and $AR^T+BS^T+RCS^T$ are both symmetric matrices.
\end{lemma}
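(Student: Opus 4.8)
The plan is to verify the three conditions defining a partial perp-system---pairwise disjointness, pairwise oppositeness, and, in the diagonal case, self-oppositeness of each line---by reducing all of them to a single check for one line and one pair, using that $G\cong S_5$ acts transitively on $\Six$. First I would record the linear-algebra criteria. Representing a line $\ell\in\Six$ by a rank-$2$ matrix $L$ over $\GF(3)$, its polar is $\ell^\perp=\{x:xML^T=0\}$, and hence: $\ell$ is self-opposite with respect to $\mathsf{W}(M)$ precisely when $LML^T$ is invertible; two lines $\ell,\ell'$ with matrices $L,L'$ are opposite precisely when $LML'^T$ is invertible; and they are disjoint precisely when $\left(\begin{smallmatrix}L\\ L'\end{smallmatrix}\right)$ has rank $4$. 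Each is the non-vanishing of a determinant attached to a line or to an (unordered) pair of lines, and the oppositeness relation is symmetric since $\det(L'ML^T)=\det(LML'^T)$.

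Next I would bring in the group. By the Construction, $\Six$ consists of the unique fixed lines of the six Sylow $5$-subgroups of $G$, which $G$ permutes by conjugation; this is the exotic degree-$6$ action of $S_5\cong\mathsf{PGL}(2,5)$ on the points of $\mathsf{PG}(1,5)$, and in particular it is $2$-transitive. By Lemma~\ref{lemma:five}, $G$ stabilises every form $M$ of the stated shape, so $G$ preserves each of the three determinant conditions above. Therefore transitivity forces self-oppositeness to hold for all six lines as soon as it holds for one, and $2$-transitivity forces oppositeness and disjointness each to hold for all fifteen pairs as soon as they hold for a single pair. This collapses the lemma to exhibiting one non-singular line of $\Six$ and one opposite, disjoint pair.

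Finally I would carry out these residual checks explicitly. Using the representatives $R,S$ fixed in Lemma~\ref{lemma:five}, one computes the fixed line of the order-$5$ element $CD$ to obtain a concrete $L$, applies a generator of $G$ to it to obtain $L'$, and evaluates the two $2\times 2$ determinants and the $4\times 6$ rank as functions of the entries of $A,B,C$. The main obstacle is exactly this step: one must show the relevant determinants do not vanish anywhere on the family, i.e.\ that the constraints ``$A+B+C$ symmetric'' and ``$AR^T+BS^T+RCS^T$ symmetric'' force the determinant polynomials to be non-zero throughout the admissible parameter space, not merely for one sample form. I expect the oppositeness and disjointness determinants to behave uniformly over every non-degenerate $M$ in the family, while the self-opposite determinant is the delicate one, since it is what singles out the forms actually making the six lines non-singular and hence genuinely producing a partial perp-system.
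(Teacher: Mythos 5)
Your proposal is sound in outline and ends up at essentially the same computation as the paper, but it takes a genuinely different route to get there. The paper makes no use of transitivity on $\Six$ at all: it fixes the representative $R=\left(\begin{smallmatrix}2&0\\ 2&2\end{smallmatrix}\right)$, $S=\left(\begin{smallmatrix}2&2\\ 0&2\end{smallmatrix}\right)$, writes down all six order-$5$ elements of $\langle C,D\rangle$ explicitly, computes their fixed lines as eigenspaces, exhibits one concrete Gram matrix (with $A=\left(\begin{smallmatrix}2&2\\0&0\end{smallmatrix}\right)$, $B=\left(\begin{smallmatrix}0&0\\1&1\end{smallmatrix}\right)$, $C=\left(\begin{smallmatrix}0&0\\1&0\end{smallmatrix}\right)$), and verifies non-singularity and pairwise oppositeness by brute force for that single $M$. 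Your reduction via the $2$-transitive (indeed sharply $3$-transitive) action of $S_5\cong\mathsf{PGL}(2,5)$ on the six Sylow $5$-subgroups is a real improvement in economy: it collapses the fifteen pair checks and six line checks to one of each, and your determinant criteria for self-oppositeness, oppositeness and disjointness are all correct (note also that you check disjointness explicitly, which the paper's proof does not even mention).

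The one point you should repair is the quantifier over $M$, which is where your ``main obstacle'' paragraph goes astray. You cannot take literally the claim that $G$ stabilises \emph{every} form of the stated shape: those conditions cut out a $10$-dimensional space of alternating forms (they merely say that the five lines of $\Five$ are totally isotropic), whereas the $6$-dimensional $\GF(3)$-representation of $\mathsf{SL}(2,5)$ is absolutely irreducible, so the space of $G$-invariant forms is only $1$-dimensional. Consequently your transitivity argument is valid precisely for the (essentially unique) $G$-invariant non-degenerate form in that family --- which is exactly the $M$ the paper writes down --- and the lemma should be read as an existence statement, not a statement about every $M$ of that shape. Once you restrict to that invariant form, the uniformity worry in your last paragraph evaporates: there is no family of determinants to control, only finitely many concrete $2\times 2$ determinants and one $4\times 6$ rank to evaluate, and your plan then closes the proof.
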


\begin{proof}
By Lemma \ref{lemma:four}, we may chose our favourite representative for $(I\,R\,S)$,
since any other choice will result in a set of lines projectively equivalent to our choice
for $\mathcal{F}_5$. So let
$R:=\left(\begin{smallmatrix}2&0\\ 2&2\end{smallmatrix}\right)$ and
  $S:=\left(\begin{smallmatrix}2&2\\ 0&2\end{smallmatrix}\right)$.  Recall from Lemma
    \ref{lemma:five} that the stabiliser of $\mathcal{F}_5$ as a group of matrices is
    $\mathsf{SL}(2,5):2$ and is generated by $C$ and $D$ from the proof of Lemma
    \ref{lemma:five}.  The six five cycles we want in $\langle C,D\rangle$ are:
\begin{center}
\begin{tabular}{c|c|c}
$CD=\left(\begin{smallmatrix}
1&2&1&2&1&2\\
1&.&1&.&1&.\\
.&.&.&.&1&2\\
.&.&.&.&.&1\\
.&2&.&.&.&.\\
1&.&.&.&.&.
\end{smallmatrix}\right)$&
$DC=\left(\begin{smallmatrix}
.&.&.&.&2&2\\
&.&.&.&.&2&1\\
1&1&.&.&.&.\\
.&1&.&.&.&.\\
.&1&.&1&.&1\\
2&1&2&1&2&1
\end{smallmatrix}\right)$&
$D^{-1}CD^2=\left(\begin{smallmatrix}
1&2&.&1&2&.\\
1&.&2&.&2&2\\
.&.&.&.&1&1\\
.&.&.&.&.&1\\
.&2&.&2&.&2\\
1&.&1&.&1&.
\end{smallmatrix}\right)$\\&&\\
\hline&&\\
$D^2CD^{-1}=\left(\begin{smallmatrix}
.&.&2&2&.&.\\
.&.&1&.&.&.\\
.&.&.&.&1&1\\
.&.&.&.&1&2\\
1&2&1&2&1&2\\
1&.&1&.&1&.
\end{smallmatrix}\right)$&
$CD^{-1}CD^2C=
\left(\begin{smallmatrix}
2&2&2&2&2&2\\
2&1&2&1&2&1\\
1&2&.&.&.&.\\
.&1&.&.&.&.\\
.&2&1&1&.&1\\
1&1&1&2&2&1
\end{smallmatrix}\right)$&
$CD^2CD^{-1}C=
\left(\begin{smallmatrix}
.&1&.&1&.&1\\
2&1&2&1&2&1\\
.&1&.&.&.&.\\
2&.&.&.&.&.\\
.&.&1&.&.&.\\
.&.&2&1&.&.
\end{smallmatrix}\right)$
\end{tabular}
\end{center}
(Note: the periods above denote zero entries.)

We simply take the eigenspaces of these matrices (over $\GF(3)$) and find that the unique
fixed lines are accordingly
\begin{center}
\begin{tabular}{c|c|c}
$\begin{pmatrix}
1&.&1&2&2&.\\
.&1&1&.&2&2
\end{pmatrix}$
&
$\begin{pmatrix}
1&.&1&2&1&1\\
.&1&1&.&2&.
\end{pmatrix}$
& 
$\begin{pmatrix}
1&.&.&2&.&1\\
.&1&1&1&2&1
\end{pmatrix}$
\\ &&\\
\hline &&\\
$\begin{pmatrix}
1&.&.&2&2&.\\
.&1&1&1&2&2
\end{pmatrix}$
&
$\begin{pmatrix}
1&.&2&2&1&1\\
.&1&.&2&2&.
\end{pmatrix}$
&
$\begin{pmatrix}
1&.&2&2&.&1\\
.&1&.&2&2&1
\end{pmatrix}$.
\end{tabular}
\end{center}

If we let 
$$
A=\begin{pmatrix}
2&2\\
0&0
\end{pmatrix},
\quad
B=\begin{pmatrix}
0&0\\
1&1
\end{pmatrix},
\quad
C=\begin{pmatrix}
0&0\\
1&0
\end{pmatrix}
$$ be the block matrices for the Gram matrix $M$, then we see that these six lines are
non-degenerate with respect to the symplectic form defined by $M$, and moreover, they are
pairwise opposite.
\end{proof}

%
%

\subsection{From Six to Fifteen}

Recall that Mathon's perp-system contains 21 elements, which we can view as the sum of 6
and 15.  The relationship between 15 and 6 is simple; 15 is $6\choose 2$.

\begin{lemma}\label{lemma:fifteen}
Consider the symplectic space $\mathsf{W}(M)$ defined
by a Gram matrix $M$ of the form 
$$\begin{pmatrix}
\O&A&B\\
-A^T&\O&C\\
-B^T&-C^T&\O
\end{pmatrix}$$
where $A+B+C$ and $AR^T+BS^T+RCS^T$ are both symmetric matrices. Then for every $\alpha$
in $\pairs{\Six}{2}$, there is a unique line $\ell_\alpha$ contained in $\alpha$ with the
following properties:
\begin{enumerate}
\item[(i)] $\ell_\alpha$ is opposite to all elements of $\Six$;
\item[(ii)] $\ell_\alpha$ is disjoint from $\alpha'$
for all pairs $\alpha'\in\pairs{\Six}{2}$ different to $\alpha$.
\end{enumerate}
\end{lemma}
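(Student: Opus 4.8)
The plan is to reduce the claim, which concerns all fifteen solids $\alpha\in\pairs{\Six}{2}$ at once, to a single explicit computation inside one solid, using the symmetry group produced in Lemma \ref{lemma:five}.

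First I would record the geometry of a single $\alpha$. Since the six lines of $\Six$ are pairwise opposite (Lemma \ref{lemma:six}), they are in particular pairwise disjoint, so each $\alpha=\langle m,m'\rangle$ is a solid and there are exactly $\binom{6}{2}=15$ of them. I would then rewrite conditions (i) and (ii) as incidence conditions \emph{internal} to $\alpha\cong\PG(3,3)$. For each $m_t\in\Six$, a dimension count using the non-degeneracy of $m_t$ and the fact that $m_t$ is opposite to the two lines spanning $\alpha$ shows that $n_t:=m_t^\perp\cap\alpha$ is a line of $\alpha$; and since $\ell_\alpha\subseteq\alpha$, the line $\ell_\alpha$ is opposite to $m_t$ if and only if $\ell_\alpha\cap n_t=\varnothing$. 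Thus (i) becomes: $\ell_\alpha$ is a line of $\alpha$ disjoint from the six lines $n_1,\dots,n_6$. For (ii), for every other solid $\alpha'$ one has $\ell_\alpha\cap\alpha'=\ell_\alpha\cap(\alpha\cap\alpha')$, so I need the dimensions of the fourteen subspaces $\alpha\cap\alpha'$. Verifying from the explicit coordinates of Lemma \ref{lemma:six} that any three lines of $\Six$ span the whole space, these intersections are all lines (for the six $\alpha'$ sharing no index with $\alpha$ a genuinely new line of $\alpha$, and for the eight $\alpha'$ sharing one index precisely one of the two spanning lines of $\alpha$); hence (ii) becomes the requirement that $\ell_\alpha$ be disjoint from a further explicit set of lines of $\alpha$.

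Next I would invoke the group. The stabiliser $G\cong S_5$ of $\Five$ stabilises $\Six$ setwise, since $\Six$ is defined intrinsically as the set of fixed lines of the $5$-cycles of $G$; and because $M$ has exactly the shape left invariant by $G$ in Lemma \ref{lemma:five}, each element of $G$ is a similitude of $M$ and therefore preserves the polarity $\perp$ and the opposite relation. Consequently $G$ permutes $\pairs{\Six}{2}$ and carries lines satisfying (i)--(ii) for $\alpha$ bijectively onto lines satisfying (i)--(ii) for $g(\alpha)$. The induced action of $S_5\cong\mathsf{PGL}(2,5)$ on the six lines is the exceptional $3$-transitive action, hence transitive on the fifteen unordered pairs; so $G$ is transitive on $\pairs{\Six}{2}$, and it suffices to establish existence and uniqueness of $\ell_\alpha$ for one representative solid $\alpha_0$.

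Finally, I would carry out the single-solid verification. Taking $\alpha_0=\langle m_1,m_2\rangle$ with the explicit representatives of Lemma \ref{lemma:six}, I would list the six lines $n_t$ and the fourteen subspaces $\alpha_0\cap\alpha'$ as lines of $\PG(3,3)$ and then check, among the $130$ lines of $\PG(3,3)$, that exactly one avoids all of them. I expect the $n_t$ to assemble into a recognisable configuration, and since the stabiliser $G_{\alpha_0}$ of order $8$ acts on $\alpha_0$ while any unique survivor is necessarily $G_{\alpha_0}$-fixed, the count should become transparent rather than a blind search. The main obstacle is precisely this last finite verification together with the supporting dimension counts: the translation of (i)--(ii) into incidences and the transitivity reduction are routine, but confirming that the many avoidance conditions inside $\PG(3,3)$ cut the candidate set down to a single line is where the real content lies, and the genericity encoded in the word \emph{opposite} is exactly what guarantees that the intermediate intersections $n_t$ and $\alpha_0\cap\alpha'$ have the expected dimensions.
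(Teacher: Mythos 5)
Your proposal is correct in outline but takes a genuinely different route from the paper. The paper's proof is a direct computation: it fixes the representative $(\I\,R\,S)$ via the transitivity of $H$ (as you do), and then simply exhibits the fifteen lines $\ell_\alpha$ and the Gram matrix $M$ as the outcome of a ``tedious calculation'', with no reduction to a single solid and no analysis of the internal geometry of $\alpha$. You instead (a) translate conditions (i) and (ii) into a line-avoidance problem inside one solid $\alpha\cong\PG(3,3)$ --- your dimension counts are sound, since oppositeness of the elements of $\Six$ forces $m_t^\perp\cap\alpha$ and $\alpha\cap\alpha'$ to be lines --- and (b) use the $S_5$-symmetry, with $\mathsf{PGL}(2,5)$ transitive on the fifteen duads, to reduce to a single representative solid. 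This buys a much smaller final verification (one configuration of at most fourteen lines of $\PG(3,3)$ instead of fifteen separate solids) and a conceptual reason for uniqueness (the surviving line must be fixed by the order-$8$ stabiliser), at the cost of the extra bookkeeping in (a). Two caveats. First, your symmetry step needs each element of $G$ to preserve the particular polarity $\perp$; this holds for the (essentially unique, by irreducibility) $G$-invariant form that the paper actually exhibits, but Lemma \ref{lemma:five} describes a whole linear family of forms of that shape, and $G$ a priori only permutes that family --- so you should either fix the invariant $M$ explicitly or note that the lemma's conclusion is really about that $M$. Second, like the paper, you leave the decisive finite check (that exactly one of the $130$ lines of $\PG(3,3)$ avoids all the prescribed lines) unexecuted, so neither argument is complete as written without carrying out that computation.
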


\begin{proof}
As in the proof of Lemma \ref{lemma:five}, we may use the transitivity of $H$ in Lemma
\ref{lemma:four} to concentrate on a particular choice of element $(I\,R\,S)$ of
$\mathcal{L}$ for out set $\Five$, and hence for our set $\Six$:
\begin{center}
\begin{tabular}{c|c|c}
$\left(\begin{smallmatrix}
1&.&1&2&2&.\\
.&1&1&.&2&2
\end{smallmatrix}\right)$
&
$\left(\begin{smallmatrix}
1&.&1&2&1&1\\
.&1&1&.&2&.
\end{smallmatrix}\right)$
& 
$\left(\begin{smallmatrix}
1&.&.&2&.&1\\
.&1&1&1&2&1
\end{smallmatrix}\right)$
\\ &&\\
\hline &&\\
$\left(\begin{smallmatrix}
1&.&.&2&2&.\\
.&1&1&1&2&2
\end{smallmatrix}\right)$
&
$\left(\begin{smallmatrix}
1&.&2&2&1&1\\
.&1&.&2&2&.
\end{smallmatrix}\right)$
&
$\left(\begin{smallmatrix}
1&.&2&2&.&1\\
.&1&.&2&2&1
\end{smallmatrix}\right)$.
\end{tabular}
\end{center}
It turns out, after some tedious calculation, that we get the following set of fifteen
lines, one $\ell_\alpha$ for each $\alpha\in\pairs{\Six}{2}$:
\begin{center}
\begin{tabular}{c|c|c|c|c}
$\left(\begin{smallmatrix}
1&.&2&2&2&2\\
.&1&.&2&.&2 
\end{smallmatrix}\right)$
&
$\left(\begin{smallmatrix} 
1&.&.&.&.&1\\
.&1&.&.&2&1  
\end{smallmatrix}\right)$
&
$\left(\begin{smallmatrix} 
1&.&1&.&2&2\\
.&1&.&1&.&2 
\end{smallmatrix}\right)$
&
$\left(\begin{smallmatrix} 
.&.&1&.&1&2 \\
.&.&.&1&1&. 
\end{smallmatrix}\right)$
&
$\left(\begin{smallmatrix} 
1&.&.&2&1&. \\
.&1&1&1&.&1 
\end{smallmatrix}\right)$
\\ &&&&\\
\hline &&&&\\
$\left(\begin{smallmatrix} 
.&.&1&.&1&1 \\
.&.&.&1&2&. 
\end{smallmatrix}\right)$
&
$\left(\begin{smallmatrix} 
1&.&2&.&.&. \\
.&1&2&2&.&. 
\end{smallmatrix}\right)$
&
$\left(\begin{smallmatrix} 
1&.&1&2&.&. \\
.&1&1&.&.&. 
\end{smallmatrix}\right)$
&
$\left(\begin{smallmatrix} 
1&.&1&.&.&. \\
.&1&.&1&.&. 
\end{smallmatrix}\right)$
&
$\left(\begin{smallmatrix} 
1&.&2&.&2&. \\
.&1&2&2&2&2 
\end{smallmatrix}\right)$
\\ &&&&\\
\hline &&&&\\
$\left(\begin{smallmatrix} 
1&.&.&.&1&. \\
.&1&.&.&.&1 
\end{smallmatrix}\right)$
&
$\left(\begin{smallmatrix} 
.&.&1&.&1&. \\
.&.&.&1&.&1 
\end{smallmatrix}\right)$
&
$\left(\begin{smallmatrix} 
1&.&.&.&2&2 \\
.&1&.&.&.&2 
\end{smallmatrix}\right)$
&
$\left(\begin{smallmatrix} 
1&.&2&.&1&. \\
.&1&2&2&.&1 
\end{smallmatrix}\right)$
&
$\left(\begin{smallmatrix}
1&.&1&.&1&1 \\
.&1&.&1&2&. 
\end{smallmatrix}\right)$.
\end{tabular}
\end{center}
The Gram matrix $M$ is, in this case,
$$\left(\begin{smallmatrix}
.&.&2&2&.&.\\
.&.&.&.&1&1\\
1&.&.&.&.&.\\
1&.&.&.&1&.\\
.&2&.&2&.&.\\
.&2&.&.&.&.
\end{smallmatrix}\right).$$
\end{proof}

Therefore, we arrive at the construction of Mathon's perp-system starting from our
original set of four lines $\Four$.

\begin{construction}[Mathon's perp-system]
Let $\mathcal{M}$ be $\Six$ together with the set of all fifteen lines $\ell_\alpha$
defined in Lemma \ref{lemma:fifteen}.
\end{construction}

%
%

\subsection{Conversely...}

We revised before that to derive the action of $S_5$ on six points given its action on
five points, we simply consider the action of $S_5$ on its $5$-cycles. To invert this
procedure, we take blocks of imprimitivity consisting of triples on the 15 pairs of elements from
$\{1,2,3,4,5,6\}$, and act naturally on these five sets of triples. (In fact, these
triples are known as \textit{synthemes}, which we will see in the following section).  So we
take triples of pairs of lines from $\Six$, and we get the set $\Five$ back again by
taking for each triple $T$, the intersection of the spans of the pairs in $T$.

%
%

\section{The generalised quadrangle $\mathsf{W}(2)$}

The smallest thick generalised quadrangle is $\mathsf{W}(2)$ and it has $15$ points and
$15$ lines. The most common construction of this generalised quadrangle is to take a
symplectic form on $V:=\GF(2)^4$, for example $\langle x,
y\rangle=x_1y_4+x_4y_1+x_2y_3+x_3y_2$, and then consider the two-dimensional subspaces of
$V$ for which this form vanishes. These two-spaces together with all of the nonzero
vectors of $V$ form the unique generalised quadrangle of order $2$ (up to isomorphism).
The oldest construction of $\mathsf{W}(2)$ is due to Sylvester (see \cite[Section
  6]{FGQ}).  Consider the six element set $\{1,2,3,4,5,6\}$. A \textit{duad} is an
unordered pair of distinct elements from this set and a \textit{syntheme} is a set of
three duads which are mutually disjoint. One obtains a generalised quadrangle of order $2$
by letting the duads be the points, synthemes as the lines and natural inclusion for
incidence. It can be easily shown that Sylvester's duad-syntheme geometry is a generalised
quadrangle of order $2$.

We will use Mathon's perp-system to construct $\mathsf{W}(2)$. Let $\pairs{\Six}{2}$ be
the set of $15$ solids spanned by two elements of $\Six$, and let $\perp$ denote the
symplectic polarity for which our $21$ lines are a perp-system.

\begin{construction2}{Generalised Quadrangle of Order $2$}\label{const:W2}\ \\
Let $\mathcal{F}_{15}$ be the fifteen lines induced by $\Six$ (see Lemma
\ref{lemma:fifteen}), and let $\mathcal{F}_{10}$ be the set of all lines of the form
$\langle \ell, m\rangle^\perp$ where $\ell,m$ are different elements of $\Five$.  Define
the following incidence structure of points and lines:
\begin{table}[H]
\begin{tabular}{lp{12cm}}
\textbf{Points:}& the elements of $\mathcal{F}_{15}$\\
\textbf{Lines (i):} & the elements of $\Five$\\
\textbf{Lines (ii):} & the elements of $\mathcal{F}_{10}$\\
\textbf{Incidence} & Let $P$ be a point. A line $\ell$ of type (i) is incident with
  $P$ if and only if $P\cap \ell^\perp=\varnothing$.  A line $\ell$ of type (ii)
  is incident with $P$ if and only if $P\subseteq \ell^\perp$.
\end{tabular}
\end{table}
\end{construction2}

\begin{theorem}
The incidence structure defined above is isomorphic to the generalised quadrangle
$\mathsf{W}(2)$.
\end{theorem}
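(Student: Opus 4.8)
The plan is to prove the incidence structure is isomorphic to $\mathsf{W}(2)$ by first verifying it has the correct parameters—$15$ points and $15$ lines, with each line containing $3$ points and each point lying on $3$ lines—and then establishing the generalised quadrangle (GQ) axiom. There are two natural routes: a direct combinatorial verification via the incidences, or exploiting the $S_5$-symmetry together with Sylvester's duad–syntheme description already set up in the paper. I would pursue the symmetry route, because the paper has deliberately arranged for the stabiliser $S_5$ of $\Five$ to act on everything in sight, and Sylvester's construction of $\mathsf{W}(2)$ is itself governed by the $S_6$-action on $\{1,2,3,4,5,6\}$; this matches the emphasis on the outer automorphism relating the five-point and six-point actions of $S_5$.

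First I would establish the dictionary. The fifteen points $\mathcal{F}_{15}$ are indexed by $\pairs{\Six}{2}$, i.e. by the $\binom{6}{2}=15$ duads of the six lines $\Six$, via the bijection $\alpha \mapsto \ell_\alpha$ from Lemma~\ref{lemma:fifteen}. The five lines of type (i), the elements of $\Five$, should correspond under the converse construction of the preceding subsection to the five ``parallel classes'' of synthemes (the blocks of imprimitivity on the $15$ duads)—this is precisely where the remark about synthemes and the $S_5 \cong \mathsf{PGL}(2,5)$ folklore is put to use. The ten lines of type (ii), from $\mathcal{F}_{10}$ indexed by pairs of elements of $\Five$, should correspond to the remaining ten synthemes. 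Together the five plus ten synthemes give all $15$ lines of Sylvester's GQ, so I would set up an explicit bijection from the $15$ points and $15$ lines of the constructed structure to duads and synthemes.

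Second I would translate incidence. For a type-(i) line $\ell \in \Five$, the condition $P \cap \ell^\perp = \varnothing$, and for a type-(ii) line $\langle m,m'\rangle^\perp$, the condition $P \subseteq \langle m,m'\rangle^{\perp\perp} = \langle m,m'\rangle$, must each be shown equivalent to duad-in-syntheme containment. The cleanest way is to check that both incidence relations are $S_5$-equivariant and then verify them on a single orbit representative, using the explicit coordinates from Lemmas~\ref{lemma:five} and \ref{lemma:fifteen}; once one incidence of each type is confirmed, transitivity of $S_5$ on the relevant orbits propagates it. After this, counting that each line is incident with exactly three points and each point with exactly three lines (three lines: one of type (i) and two of type (ii), matching a duad lying in one syntheme-class-block and two loose synthemes, or whatever the correct tally is) confirms the point/line regularity.

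**The hard part will be** verifying the GQ axiom: for a non-incident point–line pair, there is a unique point on the line collinear with the given point (equivalently, no triangles and a unique ``perp''). Here I would again lean on symmetry rather than brute force—it suffices to check the axiom for one representative of each orbit of non-incident point–line flags under $S_5$, reducing an a priori large case analysis to a handful of configurations. The genuine obstacle is that the incidence mixes two geometrically different conditions (disjointness from $\ell^\perp$ versus containment in $\ell^\perp$), so collinearity of two points must be analysed across type-(i)/type-(i), type-(i)/type-(ii), and type-(ii)/type-(ii) line pairs separately; getting the perp computations and the rank conditions (as in Lemma~\ref{lemma:four}) to align consistently is delicate. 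Once the GQ axiom holds and the parameters are $(s,t)=(2,2)$, uniqueness of the generalised quadrangle of order $2$ (Sylvester's, as recalled from \cite{FGQ}) forces the isomorphism with $\mathsf{W}(2)$, completing the proof.
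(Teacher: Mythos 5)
Your proposal follows essentially the same route as the paper: both establish an explicit isomorphism with Sylvester's duad--syntheme model, with $\mathcal{F}_{15}$ as duads, the five elements of $\Five$ as a distinguished spread of synthemes (the blocks of imprimitivity on the $15$ duads), and $\mathcal{F}_{10}$ as the remaining ten synthemes; the paper's translation of incidence rests on the identities $\ell_a=\bigcap_{\{u,v\}\subset s_a}\langle m_u,m_v\rangle$ and $\ell_a^\perp=\langle m_u^\perp\cap m_v^\perp:\{u,v\}\subset s_a\rangle$, which is exactly the concrete computation your ``check one orbit representative and propagate by $S_5$-equivariance'' step would have to supply. The one structural difference is that the part you flag as the genuine obstacle --- verifying the GQ axiom across the three types of line pairs and then invoking uniqueness of the generalised quadrangle of order $2$ --- is redundant in this scheme: once the incidence-preserving bijection with Sylvester's geometry is in hand, the GQ property is inherited outright (Sylvester's geometry is already known to be $\mathsf{W}(2)$), so no case analysis of non-incident point--line flags is needed. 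Dropping that final step leaves you with the paper's proof; keeping it gives a self-contained but longer argument that does not use the prior knowledge that the syntheme--duad geometry is a generalised quadrangle.
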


\begin{proof}
We can verify this result by computer, but we outline below a sketch of a proof which does
not rely on the computer. To do this, we show that the geometry of Construction
\ref{const:W2} is isomorphic to Sylvester's syntheme-duad geometry. Recall that each line
of $\mathcal{F}_{15}$ is uniquely determined by a pair of elements of $\Six$.  If we label
the elements of $\Six$ as $\{m_1,m_2,\ldots,m_6\}$, then let $f$ be the
bijection $$m_{ij}\mapsto \{i,j\}$$ from $\mathcal{F}_{15}$ onto the set of duads, where
$m_{ij}$ is the unique element of $\mathcal{F}_{15}$ contained in $\langle
m_i,m_j\rangle$.  We also define now a bijection on the lines of the geometries.  Recall
that we can recover the action of $S_5$ on five elements by taking a block-system of
triples on the duads, and acting naturally on these five sets of triples. In other words,
we can find five synthemes which are mutually disjoint. Such a set of synthemes is called
a \textit{spread} which is the geometric term for a partition of the points into lines.
Now the permutation group induced by the stabiliser of $\Six$ admits a block system of
five synthemes on the indices of $\{m_1,m_2,m_3,m_4,m_5,m_6\}$, and hence we have a
certain set $\mathcal{S}$ of five synthemes. Label the five elements of $\mathcal{S}$ by
$s_1,s_2,s_3,s_4,s_5$.  Now for each pair of synthemes $s_a,s_b$ from $\mathcal{S}$, it is
not difficult to see that there is a unique syntheme $s_{ab}$ not in $\mathcal{S}$ which
is disjoint from $s_a$ and $s_b$.  So now we have a way to identify the two types of line
sets of Construction \ref{const:W2}.  Label the elements of $\mathcal{F}_5$ by
$\ell_1,\ell_2,\ell_3,\ell_4,\ell_5$.  Define a map $g$ from synthemes to the lines of
Construction \ref{const:W2} by assigning $g(s_a)$ to $\ell_a$ and by also assigning
$g(s_{ab})$ to $\ell_{ab}$ where $\ell_{ab}=\langle \ell_a, \ell_b\rangle^\perp$.

The next observation to make is that $$\ell_a=\bigcap_{\{u,v\}\subset s_a}\langle m_u,
m_v\rangle$$ for all $a\in\{1,2,3,4,5\}$.  We now proceed to show that incidence is
preserved through the maps $f$ and $g$.  Here is an outline of the two cases we need to
consider:
\begin{table}[H]
\begin{tabular}{l|l|l}
& Geometry of Construction \ref{const:W2} & Sylvester's Geometry\\
\hline
Points & $\mathcal{F}_{15}$, $m_{ij}$ & Duads $\{i,j\}$ \\
Lines (i) & $\Five$, $\ell_a$ & Spread of synthemes, $s_a$ \\
Lines (ii) & $\mathcal{F}_{10}$, $\ell_{ab}$ & Other synthemes, $s_{ab}$ \\
\hline
Incidence (I) & $m_{ij} \cap \ell_a^\perp=\varnothing$ & $\{i,j\} \subseteq s_a$\\
Incidence (II) & $m_{ij} \subseteq \langle \ell_a, \ell_b\rangle $ & $\{i,j\} \subseteq s_{ab}$\\
\hline
\end{tabular}
\end{table}

In the first case, note that for $a\in\{1,2,3,4,5\}$, we have 
$\ell_a^\perp=\left\langle m_u^\perp\cap m_v^\perp:\{u,v\}\subset s_a\right\rangle$.
Hence we see that the first row of incidence relations correspond as
$$m_{ij}\cap \left\langle m_u^\perp\cap m_v^\perp:\{u,v\}\subset
s_a\right\rangle=\varnothing \quad\text{if and only if}\quad \{i,j\}\subset s_a.$$
Similarly, the second row of incidence relations follow once one has observed that
$$m_{ij}\subseteq \langle\ell_a,\ell_b\rangle\quad\text{if and only if}\quad \{i,j\}\subset s_{ab}.$$
We leave the remaining details of this proof to the reader.
\end{proof}

\begin{remark}\
\begin{enumerate}
\item[(i)] Using the Sylvester's model of $\mathsf{W}(2)$, one can also construct (see
  \cite[Section 6]{FGQ}) the unique generalised quadrangle on 27 points and 45 lines.

\item[(ii)]  An affine embedding of $\mathsf{W}(2)$ into $\mathsf{AG}(3,3)$ is known \cite{Thas78}.
  We have not been able to establish a connection between this
  construction and ours, although they have the underlying field in common.
\end{enumerate}
\end{remark}

\section{Acknowledgements}

We would like to thank Rudi Mathon, Colva Roney-Dougal and J\"urgen M\"uller for various
discussions related to this work.

\providecommand{\bysame}{\leavevmode\hbox to3em{\hrulefill}\thinspace}
\providecommand{\MR}{\relax\ifhmode\unskip\space\fi MR }
\providecommand{\MRhref}[2]{%
  \href{http://www.ams.org/mathscinet-getitem?mr=#1}{#2}
}
\providecommand{\href}[2]{#2}

\bigskip


\begin{thebibliography}{1}

\bibitem{Atlas}
J.~H. Conway, R.~T. Curtis, S.~P. Norton, R.~A. Parker, and R.~A. Wilson,
  \emph{Atlas of finite groups}, Oxford University Press, Eynsham, 1985,
  Maximal subgroups and ordinary characters for simple groups, With
  computational assistance from J. G. Thackray.

\bibitem{DeClerckUpdate}
Frank De~Clerck, \emph{Partial and semipartial geometries: an update}, Discrete
  Math. \textbf{267} (2003), no.~1-3, 75--86, Combinatorics 2000 (Gaeta).

\bibitem{DDHM}
Frank De~Clerck, Mario Delanote, Nicholas Hamilton, and Rudolf Mathon,
  \emph{Perp-systems and partial geometries}, Adv. Geom. \textbf{2} (2002),
  no.~1, 1--12.

\bibitem{Denniston69}
R.~H.~F. Denniston, \emph{Some maximal arcs in finite projective planes}, J.
  Combinatorial Theory \textbf{6} (1969), 317--319.

\bibitem{Gill}
Nick Gill, \emph{Polar spaces and embeddings of classical groups}, New Zealand
  J. Math. \textbf{36} (2007), 175--184.

\bibitem{ModAtlas}
Christoph Jansen, Klaus Lux, Richard Parker, and Robert Wilson, \emph{An atlas
  of {B}rauer characters}, London Mathematical Society Monographs. New Series,
  vol.~11, The Clarendon Press Oxford University Press, New York, 1995,
  Appendix 2 by T. Breuer and S. Norton, Oxford Science Publications.

\bibitem{FGQ}
S.~E. Payne and J.~A. Thas, \emph{Finite generalized quadrangles}, Research
  Notes in Mathematics, vol. 110, Pitman (Advanced Publishing Program), Boston,
  MA, 1984.

\bibitem{Thas78}
Joseph~A. Thas, \emph{Partial geometries in finite affine spaces}, Math. Z.
  \textbf{158} (1978), no.~1, 1--13.

\bibitem{TonesiThesis}
Cristina Tonesi, \emph{Distance-regular geometries and some group theoretical
  characterisations of (0,2)-geometries}, Ph.D. thesis, Ghent University,
  http://cage.ugent.be/geometry/Theses/38/brontesi.pdf, 2005.

\end{thebibliography}
\end{document}